\documentclass{elsarticle}

\usepackage{lineno,hyperref}
\modulolinenumbers[5]

\journal{Statistics \& Probability Letters}
\makeatletter
\def\ps@pprintTitle{%
  \let\@oddhead\@empty
  \let\@evenhead\@empty
  \def\@oddfoot{\reset@font\hfil\thepage\hfil}
  \let\@evenfoot\@oddfoot
}
\makeatother




\bibliographystyle{model2-names}\biboptions{authoryear}






\usepackage{amsmath,amssymb,amsthm,mathtools}
\allowdisplaybreaks

\usepackage[shortlabels]{enumitem}

\usepackage{multirow,array,booktabs}
    \setlength{\lightrulewidth}{\cmidrulewidth}
    \newcommand{\otoprule}{\midrule[\heavyrulewidth]}
    \newcolumntype{Q}{>{$}l<{$}}
    \setlength{\tabcolsep}{4pt}

\newcommand*{\abs}[1]{\left\lvert#1\right\rvert}
\newcommand*{\set}[1]{\left\{#1\right\}}

\newcommand{\E}{\mathbf{E}}
\newcommand{\R}{\mathbb{R}}

\newcommand{\Normal}{\mathcal N}
\newcommand{\tod}{\xrightarrow{d}}
\newcommand{\vect}[1]{\mathbf{#1}}
\newcommand{\var}{\mathbf{Var}}

\newtheorem{thm}{Theorem}[section]
\newtheorem{lem}[thm]{Lemma}
\newtheorem{prop}[thm]{Proposition}
\newtheorem{cor}[thm]{Corollary}
\theoremstyle{definition}

\newtheorem*{asm}{Assumption}
\theoremstyle{remark}
\newtheorem{remark}{Remark}

\begin{document}

\begin{frontmatter}

\title{Parameter estimation in CKLS model by continuous observations}

\author[a]{Yuliya Mishura}
\ead{myus@univ.kiev.ua}

\author[a]{Kostiantyn Ralchenko\corref{mycorrespondingauthor}}
\cortext[mycorrespondingauthor]{Corresponding author}
\ead{k.ralchenko@gmail.com}

\author[a]{Olena Dehtiar}
\ead{alenka.degtyar@gmail.com}

\address[a]{Taras Shevchenko National University of Kyiv, Department of Probability Theory, Statistics and Actuarial Mathematics, Volodymyrska 64/13, 01601 Kyiv, Ukraine}

\begin{abstract}
We consider a stochastic differential equation of the form
$dr_t = (a - b r_t) dt + \sigma r_t^\beta dW_t$,
where $a$, $b$ and $\sigma$ are positive constants,  $\beta\in(\frac12,1)$.
We study the estimation of an unknown drift parameter $(a,b)$ by continuous
observations of a sample path $\{r_t, t \in [0,T]\}$.
We prove the strong consistency and asymptotic normality of the maximum likelihood estimator.
We propose another strongly consistent estimator, which generalizes an estimator proposed in \cite{DMR} for $\beta=\frac12$.
The identification of the diffusion parameters $\sigma$ and $\beta$ is discussed as well.
\end{abstract}

\begin{keyword}
CKLS model \sep continuous observations \sep parameter estimation \sep strong consistency \sep asymptotic normality
\MSC[2010]   60H10 \sep 62F10 \sep	62F12 \sep91G70
\end{keyword}

\end{frontmatter}


\section{Introduction}

We consider a stochastic differential equation of the form
\begin{equation}\label{equ:1}
dr_t = (a - b r_t) dt + \sigma r_t^\beta dW_t,
\quad r_t\big|_{t=0} = r_0>0,
\end{equation}
where $W=\{W_t, t \ge 0\}$ is a Wiener process.
This equation is known as
Chan--Karolyi--Longstaff--Sanders (CKLS) model,
which was introduced in \cite{CKLS} for interest rate modeling.
The particular case $\beta=\frac12$ corresponds to the famous Cox--Ingersoll--Ross (CIR) process.
The model \eqref{equ:1} with $\beta = 2/3$ is used as a model for stochastic volatility, see \cite{CarrSun,Lewis}.
The fractional generalization of this model was investigated in \cite{Kubilius21}.

For the case of CIR process, the problem of the drift parameters estimation was extensively studied in the literature, see e.\,g.\  \cite{BenAlaya2012,BenAlaya2013,Rosi2010,DMR,Overbeck1997} and references cited therein.
However in the case $\beta\ne1/2$ we can mention only the work by \cite{Dokuchaev17}, who studied the estimation of the diffusion term parameters $\beta$ and $\sigma$.

In the present paper we concentrate on the case $\beta\in(\frac12,1)$.
First, we extend the results of
\cite{BenAlaya2012,BenAlaya2013} concerning the maximum likelihood estimation of $(a,b)$. Then we generalize an alternative approach to drift parameters estimation developed in the recent paper by \cite{DMR} for the CIR process.

The paper is organized as follows.
In Section~\ref{sec:prel} we present some basic properties of the CKLS model.
Section~\ref{sec:mle} is devoted to the maximum likelihood estimation of drift parameters. An alternative strongly consistent estimators are studied in
Section~\ref{sec:alt}.
In Section~\ref{sec:diff} we discuss the identification of the parameters $\beta$ and $\sigma$.
Some numerical results are presented in Section~\ref{sec:simul}.

\section{Preliminaries}
\label{sec:prel}
In what follows we assume that the following condition is satisfied.
\begin{asm}
$a>0$, $b>0$, $\sigma>0$, $\frac{1}{2}< \beta < 1$, $r_0>0$.
\end{asm}

Our main goal is to estimate parameters $a$ and $b$ of the model \eqref{equ:1} by continuous observations of a trajectory of $r$ on the interval $[0, T]$.
We assume that the parameters $\beta$ and $\sigma$ are known.
This assumption is natural, because $\beta$ and $\sigma$ can be
obtained explicitly from the observations, see Section~\ref{sec:diff} for details.

The next proposition summarizes the properties of the solution of a stochastic differential equation \eqref{equ:1}.
\begin{prop}\label{prop1}
\begin{enumerate}
\item \label{st1}
The equation \eqref{equ:1} has a unique strong solution $r=\{r_t,t\ge0\}$.
\item \label{st2}
The process $r$ is a.\,s.\ strictly positive.
\item \label{st3}
The process $r$ is an ergodic diffusion with the following stationary density:
\begin{equation*}
p_{\infty}(x) =  G \cdot x^{-2\beta} \exp\set{\frac{2}{\sigma^{2}}\left(\frac{a \cdot x^{1-2\beta}}{1-2\beta} -
\frac{b \cdot x^{2-2\beta}}{2-2\beta} \right)}, \quad x>0.
\end{equation*}
where
\begin{equation}\label{equ:G}
G =  \left(\int_{0}^\infty y^{-2\beta} \exp{\frac{2}{\sigma^{2}}\left(\frac{a \cdot y^{1-2\beta}}{1-2\beta} - \frac{b \cdot y^{2-2\beta}}{2-2\beta} \right)} dy \right)^{-1}.
\end{equation}
\item \label{st4}
The integral $\int_{0}^{\infty}x^{\mu}p_{\infty}(x)dx$ is finite for all $\mu\in\R$.
\end{enumerate}
\end{prop}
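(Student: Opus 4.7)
For part (i), I would argue that the drift $a-bx$ is globally Lipschitz while the diffusion coefficient $\sigma x^{\beta}$ is locally Hölder continuous of order $\beta>1/2$. Hence the Yamada--Watanabe criterion gives pathwise uniqueness, and a standard truncation/weak-existence argument yields a weak solution, so the two together produce a unique strong solution up to the explosion time. To exclude explosion and prove strict positivity in (ii), I would apply Feller's boundary classification on $(0,\infty)$. The scale density is
\[
s'(y) = \exp\set{-\frac{2}{\sigma^{2}}\left[\frac{a y^{1-2\beta}}{1-2\beta} - \frac{b y^{2-2\beta}}{2-2\beta}\right]}.
\]
As $y\downarrow 0$, since $1-2\beta<0$ and $a>0$, the first term inside the brackets tends to $-\infty$, so $s'(y)\to\infty$ and $s(0+)=-\infty$; as $y\uparrow\infty$, the second term dominates with negative coefficient, so again $s'(y)\to\infty$ and $s(\infty)=+\infty$. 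By Feller's test both endpoints are unattainable, so $r_t$ exists for all $t\ge 0$ and remains strictly positive a.s.

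For part (iii), I would invoke the classical theory of one-dimensional ergodic diffusions (e.g.\ Karatzas--Shreve, Section~5.5, or Khasminskii): once both boundaries are unattainable and the speed measure is finite, the diffusion is positive recurrent and ergodic, and its invariant density is proportional to the speed density
\[
m(x) = \frac{1}{\sigma^{2} x^{2\beta}} \exp\set{\frac{2}{\sigma^{2}}\left(\frac{a x^{1-2\beta}}{1-2\beta} - \frac{b x^{2-2\beta}}{2-2\beta}\right)}.
\]
Normalising by the constant $G$ in \eqref{equ:G} yields exactly the claimed $p_{\infty}$, provided the integral defining $G^{-1}$ converges; this is handled together with (iv).

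For part (iv), I would examine the tails of $p_{\infty}$. As $x\to\infty$, the exponent contains $-\tfrac{2b}{\sigma^{2}(2-2\beta)}\,x^{2-2\beta}$ with $2-2\beta>0$, producing super-polynomial decay that absorbs any factor $x^{\mu}$. As $x\downarrow 0$, since $1-2\beta<0$ and $a/(1-2\beta)<0$, the term $\tfrac{2a}{\sigma^{2}(1-2\beta)}\,x^{1-2\beta}$ tends to $-\infty$, so the exponential factor decays faster than any power and dominates both the prefactor $x^{-2\beta}$ and the extra $x^{\mu}$. Consequently $\int_{0}^{\infty} x^{\mu} p_{\infty}(x)\,dx<\infty$ for every $\mu\in\R$; in particular, setting $\mu=2\beta$ confirms the finiteness of $G^{-1}$ used in (iii).

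I expect the most delicate step to be the careful application of Feller's test to simultaneously exclude explosion at infinity and rule out attainability of $0$; the sign of the coefficient $a/(1-2\beta)$ near $0$ is the crucial point, and after that parts (iii) and (iv) reduce to direct inspection of the speed-density formula.
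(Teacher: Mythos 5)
Your proposal is correct and follows essentially the same route as the paper, which likewise invokes the Yamada--Watanabe theorem for existence and uniqueness, Feller's test (via the scale function) for non-explosion and positivity, the classical ergodic theory of one-dimensional diffusions for the stationary density, and a direct tail estimate of the integrand for the moment claim; you simply carry out the scale-density computation that the paper delegates to the cited references. The only blemish is immaterial: finiteness of $G^{-1}$ corresponds to the case $\mu=0$ (i.e.\ the exponent $\mu-2\beta=-2\beta$ of the unnormalised integrand), not $\mu=2\beta$, but your tail analysis covers all $\mu$ anyway.
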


\begin{proof}
The statements \ref{st1}--\ref{st3} can be found in \cite{AndPit07}.
They are obtained by straightforward application of the general theory of diffusion processes.
Indeed, the existence and uniqueness of a solution follow from the Yamada--Watanabe theorem \cite[Prop.\ 2.13, p. 291]{KarShr91}, the positivity is deduced from Feller's test for explosions \cite[Thm. 5.29, p. 348]{KarShr91}, the third statement is an application of the ergodic theory for homogeneous diffusions \cite[Ch. 1, \S 3]{Skorohod1987}.

In order to prove the statement \ref{st4}, note that the integrand tends to zero as $x\to0$.
Further, taking arbitrary $\lambda>1$, we get as $x\to\infty$
\[
\frac{x^{\mu}p_{\infty}(x)}{x^{-\lambda}}
= G x^{\mu+\lambda-2\beta} \exp\set{\frac{2}{\sigma^{2}}\left(\frac{a \cdot x^{1-2\beta}}{1-2\beta} -
        \frac{b \cdot x^{2-2\beta}}{2-2\beta} \right)}
        \to0.
\]
Therefore, $\int_{\epsilon}^{\infty}x^{\mu}p_{\infty}(x)dx<\infty$, $\epsilon>0$, since
$\int_\epsilon^\infty x^{-\lambda}\,dx<\infty$.
\end{proof}

%

\begin{cor}\label{cor:conv}
The ergodic theorem implies that for all $\mu\ne0$
\begin{multline*}
		 \frac{1}{T}\int_{0}^{T}r_t^{\mu} dt \to \int_{0}^{\infty}x^{\mu}p_{\infty}(x)dx 
		 \\
		 = G\int_{0}^{\infty}x^{\mu-2\beta} \exp\set{\frac{2}{\sigma^{2}}\left(\frac{a \cdot x^{1-2\beta}}{1-2\beta} -
        \frac{b \cdot x^{2-2\beta}}{2-2\beta} \right)}dx
\quad\text{a.\,s.\ as }T\to\infty.
\end{multline*}
\end{cor}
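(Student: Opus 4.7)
The plan is to view this corollary as a direct application of the ergodic theorem for time-homogeneous diffusions to the test function $f(x) = x^{\mu}$. The two ingredients required are (i) ergodicity of $r$ together with an explicit stationary density, and (ii) integrability of $f$ with respect to that density; both are already supplied by Proposition~\ref{prop1}.

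First, I would invoke statement~\ref{st3} of Proposition~\ref{prop1}: the process $r$ is ergodic with stationary density $p_\infty$. The general ergodic theorem for homogeneous diffusions (the same result from \cite{Skorohod1987} already cited in the proof of Proposition~\ref{prop1}) then gives, for every Borel function $f$ with $\int_0^\infty |f(x)|\,p_\infty(x)\,dx < \infty$,
\[
\frac{1}{T}\int_0^T f(r_t)\,dt \;\longrightarrow\; \int_0^\infty f(x)\,p_\infty(x)\,dx \qquad \text{a.\,s.\ as } T\to\infty.
\]

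Second, I would specialize to $f(x)=x^\mu$. The required integrability $\int_0^\infty x^\mu p_\infty(x)\,dx < \infty$ is precisely statement~\ref{st4} of Proposition~\ref{prop1}, valid for every $\mu\in\R$ (in particular for all $\mu\neq 0$). Thus the ergodic theorem applies and yields the first equality in the displayed formula.

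Finally, the second equality is a matter of substituting the explicit expression for $p_\infty(x)$ from Proposition~\ref{prop1}\ref{st3}, which combines the factor $x^{-2\beta}$ with $x^\mu$ into $x^{\mu-2\beta}$ while leaving the exponential factor unchanged. No calculation beyond this substitution is needed, and there is no real obstacle: the corollary is essentially a bookkeeping step packaging Proposition~\ref{prop1} into the form that will be convenient for the consistency and asymptotic normality proofs in Sections~\ref{sec:mle} and~\ref{sec:alt}.
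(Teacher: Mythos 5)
Your argument is correct and coincides with the paper's intended justification: the corollary is stated in the paper without a separate proof precisely because it is, as you say, a direct application of the ergodic theorem for homogeneous diffusions (via \cite{Skorohod1987}) to $f(x)=x^{\mu}$, with integrability supplied by statement~\ref{st4} of Proposition~\ref{prop1} and the second equality being just the substitution of the explicit density. Nothing is missing.
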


\section{Maximum likelihood estimation of drift parameters}
\label{sec:mle}

In this section we discuss the construction of the maximum likelihood estimators of the drift parameters and their asymptotic behavior.
We consider three cases: estimation of the couple $(a,b)$, estimation of $a$ for known $b$, estimation of $b$ for known $a$.

\subsection{Estimation of the vector parameter $(a,b)$}
Let us start with the construction of the maximum likelihood estimator of the couple $(a, b)$.
	\begin{lem}
The maximum likelihood estimator for the couple $(a,b)$ constructed by the continuous observations of $r$ over the interval $[0,T]$ has the form
		\begin{gather}
		\label{equ:est-a}
	     \hat a_T  =\frac{\int_{0}^{T}\frac{dr_t}{r_t^{2\beta}} \cdot \int_{0}^{T}{r_t^{2-2\beta}dt-\int_{0}^{T}r_t^{1-2\beta}dr_t \cdot \int_{0}^{T} r_t^{1-2\beta}dt} }{\int_{0}^{T}\frac{dt}{r_t^{2\beta}}\cdot \int_{0}^{T}r_t^{2-2\beta}dt-  \left (\int_{0}^{T} r_t^{1-2\beta}dt\right)^2};
	\\
	\label{equ:est-b}
	      \hat b_T =\frac{\int_{0}^{T}\frac{dr_t}{r_t^{2\beta}} \cdot \int_{0}^{T} r_t^{1-2\beta}dt - \int_{0}^{T}r_t^{1-2\beta}dr_t \cdot \int_{0}^{T}\frac{dt}{r_t^{2\beta}} }{\int_{0}^{T}\frac{dt}{r_t^{2\beta}}\cdot \int_{0}^{T}r_t^{2-2\beta}dt-  \left (\int_{0}^{T} r_t^{1-2\beta}dt\right )^2}.
	\end{gather}
	\end{lem}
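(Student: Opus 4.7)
The plan is the standard Girsanov/maximum-likelihood route. Since the diffusion coefficient $\sigma r_t^\beta$ in \eqref{equ:1} does not depend on $(a,b)$, I would compare the law $\prob_{a,b}$ of $\{r_t,t\in[0,T]\}$ with a reference law $\prob_0$ obtained by setting $a=b=0$; after a standard localization at hitting times of levels tending to zero, Girsanov's theorem yields, up to an additive constant independent of $(a,b)$, the log-density
\[
\ell_T(a,b)=\int_0^T\frac{a-b r_t}{\sigma^2 r_t^{2\beta}}\,dr_t-\frac12\int_0^T\frac{(a-b r_t)^2}{\sigma^2 r_t^{2\beta}}\,dt.
\]
Both integrals are a.\,s.\ well defined since $r$ is continuous and strictly positive on $[0,T]$ by Proposition~\ref{prop1}(\ref{st2}), so $r_t^{-2\beta}$ is bounded on this compact interval.

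Introducing the shorthand $I_k=\int_0^T r_t^{k-1-2\beta}\,dt$ for $k=1,2,3$ and $J_k=\int_0^T r_t^{k-1-2\beta}\,dr_t$ for $k=1,2$, a direct differentiation gives
\[
\partial_a\ell_T=\sigma^{-2}(J_1-aI_1+bI_2),\qquad \partial_b\ell_T=-\sigma^{-2}(J_2-aI_2+bI_3).
\]
Setting both derivatives to zero produces the $2\times 2$ linear system $aI_1-bI_2=J_1$, $aI_2-bI_3=J_2$, whose unique solution, obtained by Cramer's rule with common denominator $I_1I_3-I_2^2$, is precisely \eqref{equ:est-a}--\eqref{equ:est-b}.

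It remains to verify that this critical point is the global maximizer and that Cramer's rule is legitimate. The second-order derivatives are $\partial_{aa}\ell_T=-I_1/\sigma^2$, $\partial_{bb}\ell_T=-I_3/\sigma^2$ and $\partial_{ab}\ell_T=I_2/\sigma^2$, so both properties reduce to the single inequality $I_1I_3>I_2^2$. This is the strict Cauchy--Schwarz inequality applied to $r_t^{-\beta}$ and $r_t^{1-\beta}$ in $L^2([0,T])$, with strictness granted by the fact that their ratio $r_t$ is not a.\,s.\ constant on any subinterval of $[0,T]$, since $r$ is a non-degenerate diffusion. I expect the Girsanov absolute-continuity step, rather than the algebra, to be the only delicate point worth checking carefully, because the instantaneous drift-to-volatility ratio $(a-b r_t)/(\sigma r_t^\beta)$ is unbounded near $r=0$; but the localization by hitting times of small levels, which tend to infinity a.\,s.\ by Proposition~\ref{prop1}(\ref{st2}), is standard and produces equivalence of $\prob_{a,b}$ and $\prob_0$ on each localized $\sigma$-algebra.
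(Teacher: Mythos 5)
Your proposal follows the same route as the paper's own proof: a Girsanov likelihood ratio with respect to the reference measure at $(a,b)=(0,0)$, differentiation of the log-likelihood, solution of the resulting $2\times2$ linear system by Cramer's rule, and a Cauchy--Schwarz check of the Hessian, and your algebra reproduces \eqref{equ:est-a}--\eqref{equ:est-b} exactly. If anything you are slightly more careful than the published argument, which only records the non-strict inequality in \eqref{equmin:2} and does not discuss localization for the Girsanov step near $r=0$; your strict Cauchy--Schwarz argument and hitting-time localization address both of these points.
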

\begin{proof} Dividing \eqref{equ:1} by $r_t^\beta$ and integrating \eqref{equ:1} over the time interval $[0,s]$ we get  the equality:
	\begin{equation*}
         \int_{0}^{s}\frac{dr_t}{r_t^\beta} = \int_{0}^{s}\left(\frac{a}{r_t^\beta}-b\cdot{{r_t}^{1-\beta}}\right)dt +  \sigma W_s.
    \end{equation*}

	In order to construct likelihood function for the estimation of couple $(a,b)$ of parameters, we
use the  Girsanov theorem for the Wiener process with the drift that equals $$\frac{1}{\sigma}\int_{0}^{s} \left(\frac{a}{r_t^\beta}-b\cdot{{r_t}^{1-\beta}}\right)dt.$$
Then the likelihood function that corresponds  to the likelihood  $dQ_{(0,0)}/dQ_{(a,b)}$, where $Q_{(0,0)}$ is a probability measure responsible for zero values of parameters, and $Q_{(a,b)}$ is responsible for the couple  $(a,b)$, gets the following form:
	\begin{align*}
	     \mathcal{L} & = \exp \left\{ - \int_{0}^{T} \frac{a - b r_t}{\sigma r_t^\beta}dW_t - \frac{1}{2} \int_{0}^{T} \frac{(a - b r_t)^2}{(\sigma {r_t^\beta})^2}dt \right\}\\
	     &= \exp \left\{- \int_{0}^{T} \frac{a - b r_t}{\sigma^2 r_t^{2\beta}}(dr_t - (a - b r_t) dt)
	     - \frac{1}{2} \int_{0}^{T} \frac{(a - b r_t)^2}{(\sigma {r_t^\beta})^2}dt\right\} \\
	     &= \exp \left\{- \int_{0}^{T} \frac{a - b r_t}{\sigma^2 r_t^{2\beta}}dr_t + \frac{1}{2} \int_{0}^{T} \frac{(a - b r_t)^2}{(\sigma {r_t^\beta})^2}dt\right\}.
	\end{align*}
	Since we are interested in maximization of the function that corresponds to $dQ_{(a,b)}/dQ_{(0,0)}$, our likelihood function has the form
\begin{align*}
	     \tilde{\mathcal{L}} & =
	      \exp \left\{\int_{0}^{T} \frac{a - b r_t}{\sigma^2 r_t^{2\beta}}dr_t - \frac{1}{2} \int_{0}^{T} \frac{(a - b r_t)^2}{(\sigma {r_t^\beta})^2}dt\right\}.
	\end{align*}
    As usual, it is computationally convenient to work with the log-likelihood function:
	\begin{equation}\label{equ:2}
	     \log \tilde{\mathcal{L}} =   \int_{0}^{T} \frac{a - b r_t}{\sigma^2 r_t^{2\beta}}dr_t - \frac{1}{2} \int_{0}^{T} \frac{(a - b r_t)^2}{(\sigma {r_t^\beta})^2}dt.
	\end{equation}
	
	In order to determine the maximum value of the function \eqref{equ:2},   we calculate  its first derivatives with respect to $a$ and $b$:
	\begin{align}\label{equ:3}
	     (\log\tilde{\mathcal{L}})_a ^\prime &=   \int_{0}^{T} \left(\frac{a - b r_t}{\sigma^2 r_t^{2\beta}}\right)_a ^\prime dr_t - \frac{1}{2} \int_{0}^{T} \left(\frac{(a - b r_t)^2}{(\sigma {r_t^\beta})^2}\right)_a ^\prime dt
	     \notag\\
	     &=  \frac{1}{\sigma^2}\int_{0}^{T}\frac{dr_t}{r_t^{2\beta}} - \frac{a}{\sigma^2}\int_{0}^{T}\frac{dt}{r_t^{2\beta}}
	      + \frac{b}{\sigma^2}\int_{0}^{T} r_t^{1-2\beta}dt;
	\\
	\label{equ:4}
(\log \tilde{\mathcal{L}})_b ^\prime &=  \int_{0}^{T} \left(\frac{a - b r_t}{\sigma^2 r_t^{2\beta}}\right)_b ^\prime dr_t - \frac{1}{2} \int_{0}^{T} \left(\frac{(a - b r_t)^2}{(\sigma {r_t^\beta})^2}\right)_b ^\prime dt
\notag\\
&= -\frac{1}{\sigma^2}\int_{0}^{T}r_t^{1-2\beta}dr_t+ \frac{a}{\sigma^2}\int_{0}^{T} r_t^{1-2\beta}dt
- \frac{b}{\sigma^2}\int_{0}^{T}r_t^{2-2\beta} dt.
	\end{align}
	
	Now we equate  the derivatives \eqref{equ:3} and  \eqref{equ:4}   to zero and solve the resulting system of equations:
\[	
	\begin{cases}
		 (\log\tilde{\mathcal{L}})_a ^\prime=0,\\
		 (\log\tilde{\mathcal{L}})_b ^\prime = 0.
	\end{cases}
	\Longleftrightarrow
	\hspace{13pt}
	\begin{cases}
	    - a\int_{0}^{T}\frac{dt}{r_t^{2\beta}} + b\int_{0}^{T} r_t^{1-2\beta}dt= -\int_{0}^{T}\frac{dr_t}{r_t^{2\beta}} ,\\
		 a\int_{0}^{T} r_t^{1-2\beta}dt - b\int_{0}^{T}r_t^{2-2\beta} dt=\int_{0}^{T}r_t^{1-2\beta}dr_t,
	\end{cases}
\]  whence \eqref{equ:est-a}  and \eqref{equ:est-b}  follow.
        In order to show that function \eqref{equ:2} attains its maximum value at the point $(\hat a_T, \hat b_T)$,  we calculate its second derivatives:
        \begin{equation}\label{equmin:1}
	        \left (\log \tilde{\mathcal{L}}\right )_{aa} ^{\prime\prime} = - \frac{1}{\sigma^2}\int_{0}^{T}\frac{dt}{r_t^{2\beta}},\quad
	        \left (\log \tilde{\mathcal{L}}\right )_{bb} ^{\prime\prime} = - \frac{1}{\sigma^2}\int_{0}^{T}r_t^{2-2\beta} dt,\quad
	        \left (\log \tilde{\mathcal{L}}\right )_{ba} ^{\prime\prime} = \frac{1}{\sigma^2}\int_{0}^{T} r_t^{1-2\beta}dt.
	   \end{equation}

        Using the Cauchy--Schwarz inequality, we immediately get that
%
	    \begin{equation}\label{equmin:2}
	         (\log \tilde{\mathcal{L}})_{bb} ^{\prime\prime}\cdot (\log \tilde{\mathcal{L}})_{aa} ^{\prime\prime}-((\log \tilde{\mathcal{L}})_{ba} ^{\prime\prime})^2 = \frac{1}{\sigma^4}\left(\int_{0}^{T}\frac{dt}{r_t^{2\beta}}\cdot\int_{0}^{T}r_t^{2-2\beta} dt - \left(\int_{0}^{T} r_t^{1-2\beta}dt\right)^2\right) \ge 0.
	    \end{equation}
Relations \eqref{equmin:1} and \eqref{equmin:2} mean that we have maximum at the point $(\hat a_T, \hat b_T)$, and the proof follows.
\end{proof}

\begin{thm}\label{th:consist}
The estimator $(\hat a_T,\hat b_T)$ is strongly consistent.
\end{thm}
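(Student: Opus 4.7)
The plan is to substitute the dynamics of $r$ from \eqref{equ:1} into the stochastic integrals appearing in \eqref{equ:est-a} and \eqref{equ:est-b} so that the deterministic parts cancel, leaving an explicit remainder of the form (martingale)/(denominator). Then I apply Corollary~\ref{cor:conv} together with the strong law of large numbers for continuous local martingales to every factor.

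Writing $M^{(1)}_T := \int_0^T r_t^{-\beta}\,dW_t$ and $M^{(2)}_T := \int_0^T r_t^{1-\beta}\,dW_t$, equation \eqref{equ:1} yields $\int_0^T r_t^{-2\beta}\,dr_t = a\int_0^T r_t^{-2\beta}\,dt - b\int_0^T r_t^{1-2\beta}\,dt + \sigma M^{(1)}_T$ and $\int_0^T r_t^{1-2\beta}\,dr_t = a\int_0^T r_t^{1-2\beta}\,dt - b\int_0^T r_t^{2-2\beta}\,dt + \sigma M^{(2)}_T$. Inserting these into \eqref{equ:est-a} and \eqref{equ:est-b} and simplifying, I obtain, with the shared denominator $D_T := \int_0^T r_t^{-2\beta}\,dt \cdot \int_0^T r_t^{2-2\beta}\,dt - \bigl(\int_0^T r_t^{1-2\beta}\,dt\bigr)^2$,
\begin{align*}
\hat a_T - a &= \frac{\sigma\bigl(M^{(1)}_T\int_0^T r_t^{2-2\beta}\,dt - M^{(2)}_T\int_0^T r_t^{1-2\beta}\,dt\bigr)}{D_T},\\
\hat b_T - b &= \frac{\sigma\bigl(M^{(1)}_T\int_0^T r_t^{1-2\beta}\,dt - M^{(2)}_T\int_0^T r_t^{-2\beta}\,dt\bigr)}{D_T}.
\end{align*}

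Next I divide each factor by $T$. By Corollary~\ref{cor:conv} (the finiteness of the limits being guaranteed by Proposition~\ref{prop1}\ref{st4}), for each $\mu\in\{-2\beta,1-2\beta,2-2\beta\}$ the ratio $T^{-1}\int_0^T r_t^\mu\,dt$ converges almost surely to the finite limit $m_\mu := \int_0^\infty x^\mu p_\infty(x)\,dx$. The quadratic variations $\langle M^{(1)}\rangle_T = \int_0^T r_t^{-2\beta}\,dt$ and $\langle M^{(2)}\rangle_T = \int_0^T r_t^{2-2\beta}\,dt$ both grow linearly in $T$, so they tend to infinity a.s., and the strong law of large numbers for continuous local martingales gives $M^{(i)}_T/\langle M^{(i)}\rangle_T \to 0$ a.s., in particular $M^{(i)}_T/T \to 0$ a.s.\ for $i=1,2$.

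Consequently $T^{-2}D_T \to m_{-2\beta}\,m_{2-2\beta} - m_{1-2\beta}^2$. The one non-routine point is that this limit is strictly positive: by Cauchy--Schwarz applied to the probability measure $p_\infty(x)\,dx$ and to the functions $x\mapsto x^{-\beta}$ and $x\mapsto x^{1-\beta}$, one has $m_{1-2\beta}^2 \le m_{-2\beta}\,m_{2-2\beta}$, with equality only if these two functions are proportional on the support of $p_\infty$, which fails because their ratio equals $x^{-1}$. Each numerator, divided by $T^2$, splits as $(T^{-1}M^{(i)}_T)\cdot(T^{-1}\int_0^T r_t^\nu\,dt)\to 0\cdot m_\nu = 0$ a.s. Hence $\hat a_T\to a$ and $\hat b_T\to b$ almost surely. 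The main obstacle is purely the strict positivity of $m_{-2\beta}m_{2-2\beta} - m_{1-2\beta}^2$; everything else is a direct assembly of the ergodic theorem with the martingale SLLN.
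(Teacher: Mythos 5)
Your proposal is correct and follows essentially the same route as the paper: the same substitution of \eqref{equ:1} to obtain the error representations \eqref{equ:est-a-2}--\eqref{equ:est-b-2}, the same Cauchy--Schwarz argument for the strict positivity of the limiting denominator, and the same combination of Corollary~\ref{cor:conv} with the martingale strong law of large numbers (via the factorization $M^{(i)}_T/T = \bigl(M^{(i)}_T/\langle M^{(i)}\rangle_T\bigr)\cdot\bigl(\langle M^{(i)}\rangle_T/T\bigr)$) to kill the numerator. No gaps.
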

\begin{proof}
Using \eqref{equ:1}, one can deduce from \eqref{equ:est-a} and \eqref{equ:est-b} the following expressions for the errors:
\begin{align}
\label{equ:est-a-2}
\hat a_T - a  &= \sigma\, \frac{\int_{0}^{T}r_t^{-\beta}dW_t \cdot \int_{0}^{T}{r_t^{2-2\beta}dt-\int_{0}^{T}r_t^{1-\beta}dW_t \cdot \int_{0}^{T} r_t^{1-2\beta}dt} }{\int_{0}^{T} r_t^{-2\beta} dt\cdot \int_{0}^{T}r_t^{2-2\beta}dt - \left (\int_{0}^{T} r_t^{1-2\beta}dt\right )^2}
\\
\label{equ:est-b-2}
\hat b_T - b  &= \sigma\,\frac{\int_{0}^{T}r_t^{-\beta}dW_t \cdot \int_{0}^{T}{r_t^{1-2\beta}dt-\int_{0}^{T}r_t^{1-\beta}dW_t \cdot \int_{0}^{T} r_t^{-2\beta}dt} }{\int_{0}^{T} r_t^{-2\beta} dt\cdot \int_{0}^{T}r_t^{2-2\beta}dt - \left (\int_{0}^{T} r_t^{1-2\beta}dt\right )^2}
\end{align}

We prove the strong consistency of $\hat a_T$, and the strong consistency of $\hat b_T$ is established similarly.
Write
\begin{align}
\label{equ:est-a-3}
\hat a_T - a  &= \sigma\, \frac{\frac{1}{T^2}\int_{0}^{T}r_t^{-\beta}dW_t \cdot \int_{0}^{T}{r_t^{2-2\beta}dt-\frac{1}{T^2}\int_{0}^{T}r_t^{1-\beta}dW_t \cdot \int_{0}^{T} r_t^{1-2\beta}dt} }{\frac{1}{T}\int_{0}^{T} r_t^{-2\beta} dt\cdot \frac{1}{T}\int_{0}^{T}r_t^{2-2\beta}dt - \left (\frac{1}{T}\int_{0}^{T} r_t^{1-2\beta}dt\right )^2}
\end{align}

According to Corollary \ref{cor:conv}, the denominator in \eqref{equ:est-a-3} converges to
\begin{align*}
D_\infty = \int_{0}^{T}x^{-2\beta}p_{\infty}(x)dx \cdot \int_{0}^{T}x^{2-2\beta}p_{\infty}(x)dx -  \left (\int_{0}^{T} x^{1-2\beta}p_{\infty}(x)dx \right )^2
\end{align*}
a.\,s.\ as $T\rightarrow\infty$.
By the Cauchy--Schwarz inequality, we have that $D_\infty \ge 0$, where the equality may hold only if the functions $x^{-\beta}$ and $x^{1-\beta}$ are linearly dependent (i.\,e.\ $x^{-\beta} = Cx^{1-\beta}$ a.\,e.\ for some constant $C$), which is impossible. Thus $D_\infty > 0$.

Now it remains to prove that the numerator in \eqref{equ:est-a-3} tends to zero a.\,s.\ as  $T\rightarrow\infty$. The first term in the numerator can be rewritten as follows:
\begin{align*}
\frac{1}{T^2}\int_{0}^{T}r_t^{-\beta}dW_t \cdot \int_{0}^{T}r_t^{2-2\beta}dt = \frac{\int_{0}^{T}r_t^{-\beta}dW_t}{\int_{0}^{T}r_t^{-2\beta}dt} \cdot \frac{1}{T} \int_{0}^{T} r_t^{-2\beta}dt \cdot \frac{1}{T} \int_{0}^{T} r_t^{2-2\beta}dt.
\end{align*}
By Corollary \ref{cor:conv},
\begin{gather}
		 \frac{1}{T}\int_{0}^{T}r_t^{-2\beta} dt \to \int_{0}^{\infty}x^{-2\beta}p_{\infty}(x)dx,
		 \label{equ:conv1}
		\\
		\frac{1}{T}\int_{0}^{T}r_t^{2-2\beta} dt \to \int_{0}^{\infty}x^{2-2\beta}p_{\infty}(x)dx
		\label{equ:conv2}
\end{gather}
a.\,s., as $T\rightarrow\infty$.
Consequently, $\int_{0}^{T}r_t^{-2\beta} dt \to \infty$, a.\,s., as $T\rightarrow\infty$, but $\int_{0}^{T}r_t^{-2\beta} dt$ is a square characteristic of the locally square integrable martingale $\int_{0}^{T}r_t^{-\beta}dW_t$. According to the strong law of large numbers for locally square integrable martingales \cite{LSh2012},
\[
\frac{\int_{0}^{T}r_t^{-\beta}dW_t}{\int_{0}^{T}r_t^{-2\beta}dt} \to 0 \quad\text{a.\,s., as }T\rightarrow\infty.
\]
Hence, the first term in the numerator converges to zero a.\,s., as $T\rightarrow\infty$.
The second term can be considered similarly.
	\end{proof}

\begin{thm}\label{th:as.norm}
The estimator $(\hat a_T,\hat b_T)$ is asymptotically normal:
\begin{align*}
\sqrt{T}
\begin{pmatrix}
\hat a_T - a \\
\hat b_T - b
\end{pmatrix}
\tod \Normal\left (\vect0,\sigma^2 \Sigma^{-1}\right ), \quad T \to \infty,
\end{align*}
where
\begin{align*}
\Sigma =
\begin{pmatrix}
\int_{0}^{\infty}x^{-2\beta}p_{\infty}(x)dx & -\int_{0}^{\infty}x^{1-2\beta}p_{\infty}(x)dx \\[5pt]
-\int_{0}^{\infty}x^{1-2\beta}p_{\infty}(x)dx & \int_{0}^{\infty}x^{2-2\beta}p_{\infty}(x)dx
\end{pmatrix}.
\end{align*}
\end{thm}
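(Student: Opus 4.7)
The plan is to recast the error formulas \eqref{equ:est-a-2}--\eqref{equ:est-b-2} in compact matrix form. Set
\[
M_T = \begin{pmatrix}
\int_{0}^{T} r_t^{-2\beta}\,dt & -\int_{0}^{T} r_t^{1-2\beta}\,dt \\
-\int_{0}^{T} r_t^{1-2\beta}\,dt & \int_{0}^{T} r_t^{2-2\beta}\,dt
\end{pmatrix},
\qquad
\xi_T = \begin{pmatrix} \int_{0}^{T} r_t^{-\beta}\,dW_t \\ -\int_{0}^{T} r_t^{1-\beta}\,dW_t \end{pmatrix}.
\]
Expanding $M_T^{-1}\xi_T$ via the adjugate shows that \eqref{equ:est-a-2}--\eqref{equ:est-b-2} are equivalent to
\[
\sqrt{T}\begin{pmatrix}\hat a_T - a \\ \hat b_T - b\end{pmatrix}
= \sigma \left(\frac{M_T}{T}\right)^{-1}\frac{\xi_T}{\sqrt{T}}.
\]
The proof thus reduces to (i) an almost sure convergence for $M_T/T$ and (ii) a central limit theorem for $\xi_T/\sqrt T$, combined by Slutsky's lemma.

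Step (i) is immediate: Corollary~\ref{cor:conv} applied with $\mu = -2\beta,\ 1-2\beta,\ 2-2\beta$ yields $M_T/T \to \Sigma$ a.s., and the Cauchy--Schwarz argument already used in the proof of Theorem~\ref{th:consist} gives $\det \Sigma > 0$, whence $(M_T/T)^{-1} \to \Sigma^{-1}$ a.s.

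Step (ii) is the key point. The predictable quadratic variation matrix of the continuous vector local martingale $\xi_T$ coincides with $M_T$; that is, $\langle \xi\rangle_T = M_T$, so by step (i) we have $\langle \xi\rangle_T/T \to \Sigma$ a.s.\ to a deterministic, positive-definite limit. Since $\xi_T$ has continuous paths, the jump/Lindeberg condition in the multidimensional martingale CLT is trivially satisfied, so (e.g.\ by the results in \cite{LSh2012}) $\xi_T/\sqrt{T} \tod \Normal(\vect{0}, \Sigma)$. Slutsky's lemma then gives the limit $\sigma\Sigma^{-1}Z$ with $Z\sim\Normal(\vect{0},\Sigma)$, whose law is $\Normal(\vect{0},\sigma^2\Sigma^{-1}\Sigma\Sigma^{-1}) = \Normal(\vect{0},\sigma^2\Sigma^{-1})$, as claimed.

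The main obstacle is step (ii): one needs convergence of the full matrix $\langle \xi\rangle_T/T$ to a deterministic positive-definite limit, not merely of the diagonal entries separately, in order to identify the joint Gaussian law. Once this matrix convergence is in hand no further mixing conditions are required, thanks to the pathwise continuity of $\xi_T$.
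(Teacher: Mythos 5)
Your proposal is correct and follows essentially the same route as the paper: both represent the error vector as $\sigma$ times the inverse of the quadratic-variation matrix applied to the two-dimensional stochastic-integral martingale, obtain the a.s.\ convergence of that matrix (normalized by $T$) to $\Sigma$ from Corollary~\ref{cor:conv}, and invoke a multidimensional martingale central limit theorem. The only cosmetic difference is that the paper applies the CLT of \cite[Thm.~12.6]{Heyde} directly to $\sqrt{T}\langle M\rangle_T^{-1}M_T$, whereas you first get the CLT for the martingale itself and then combine with Slutsky's lemma; your explicit remark that $\det\Sigma>0$ (via the Cauchy--Schwarz argument from Theorem~\ref{th:consist}) is a point the paper leaves implicit.
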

\begin{proof}
Introduce the following two-dimensional martingale
\begin{align*}
M_T =
\begin{pmatrix}
\int_{0}^{T}r_t^{-\beta}dW_t \\[5pt]
-\int_{0}^{T}r_t^{1-\beta}dW_t
\end{pmatrix}, \quad T \ge 0,
\end{align*}
with quadratic variation matrix
\begin{align*}
\langle M\rangle_T =
\begin{pmatrix}
\int_{0}^{T}r_t^{-2\beta}dt & -\int_{0}^{T}r_t^{1-2\beta}dt\\[5pt]
-\int_{0}^{T}r_t^{1-2\beta}dt & \int_{0}^{T}r_t^{2-2\beta}dt
\end{pmatrix}.
\end{align*}
It follows from \eqref{equ:est-a-2}--\eqref{equ:est-b-2} that
\begin{align*}
\begin{pmatrix}
\hat a_T - a \\
\hat b_T - b
\end{pmatrix} = \sigma \langle M\rangle_T^{-1} M_T.
\end{align*}
According to Corollary \ref{cor:conv},
\begin{align*}
\frac{\langle M\rangle_T}{T} \to \Sigma \quad\text{a.\,s., as }T\rightarrow\infty.
\end{align*}
Then by the central limit theorem for multidimensional martingales \cite[Thm.12.6]{Heyde}, we have the convergence
\begin{align*}
\sqrt{T} \langle M\rangle_T^{-1} M_T \tod \Normal(\vect0,\Sigma^{-1}),
\quad\text{as } T\to\infty,
\end{align*}
whence the result follows.
\end{proof}

\subsection{Estimation of $b$ when $a$ is known}

The case of known $a$ is considered similarly to the case of two unknown parameters, so we omit some details.

\begin{thm}\label{thm:b-known}
Let $a$ be known. The maximum likelihood estimator for $b$ is
\[
\check{b}_T=
\frac{a\int_0^T r_t^{1-2\beta}\,dt - \int_0^T r_t^{1-2\beta}\,dr_t}{\int_0^T r_t^{2-2\beta}\,dt}
\]
It is strongly consistent and asymptotically normal:
\[
\sqrt{T}\left(\check{b}_T - b\right) \tod \Normal\left(0,
\frac{\sigma^2}{\int_0^\infty x^{2-2\beta} p_\infty(x)\,dx} \right).
\]
\end{thm}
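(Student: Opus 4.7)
The plan is to mirror the argument used for the two-parameter MLE in the previous subsection, exploiting the fact that with $a$ fixed the log-likelihood \eqref{equ:2} becomes a concave quadratic function of $b$ alone. So first I would simply reuse the computation of $(\log\tilde{\mathcal L})_b^\prime$ from \eqref{equ:4}, set this derivative equal to zero, and solve for $b$; rearranging gives the closed-form expression for $\check b_T$ stated in the theorem. Because $(\log\tilde{\mathcal L})_{bb}^{\prime\prime} = -\sigma^{-2}\int_0^T r_t^{2-2\beta}\,dt<0$ a.s., the critical point is indeed the maximum, so $\check b_T$ is the MLE.

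The next step is to obtain a clean expression for the error $\check b_T - b$. Substituting the dynamics \eqref{equ:1} into $\int_0^T r_t^{1-2\beta}\,dr_t$ gives
\[
\int_0^T r_t^{1-2\beta}\,dr_t
= a\int_0^T r_t^{1-2\beta}\,dt - b\int_0^T r_t^{2-2\beta}\,dt + \sigma\int_0^T r_t^{1-\beta}\,dW_t,
\]
so the $a$-terms in the numerator of $\check b_T$ cancel and one is left with
\[
\check b_T - b = -\sigma\,\frac{\int_0^T r_t^{1-\beta}\,dW_t}{\int_0^T r_t^{2-2\beta}\,dt}.
\]
This is the one-dimensional analogue of \eqref{equ:est-b-2} and the rest of the proof is driven by it.

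For strong consistency, I would apply Corollary \ref{cor:conv} with $\mu=2-2\beta$: the denominator divided by $T$ converges a.s. to $\int_0^\infty x^{2-2\beta}p_\infty(x)\,dx$, which is finite and strictly positive by Proposition \ref{prop1}\ref{st4}; in particular the denominator tends to infinity. That denominator is precisely the predictable quadratic variation of the locally square-integrable martingale $N_T:=\int_0^T r_t^{1-\beta}\,dW_t$, so the strong law of large numbers for local martingales cited in the proof of Theorem \ref{th:consist} yields $N_T / \langle N\rangle_T \to 0$ a.s., and hence $\check b_T\to b$ a.s.

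Finally, for asymptotic normality I would rewrite
\[
\sqrt T(\check b_T - b) = -\sigma\,\frac{T^{-1/2}N_T}{T^{-1}\int_0^T r_t^{2-2\beta}\,dt}.
\]
The denominator converges a.s. to $I:=\int_0^\infty x^{2-2\beta}p_\infty(x)\,dx$, and since $\langle N\rangle_T/T\to I$ a.s., the univariate CLT for continuous square-integrable martingales (a special case of the multidimensional version already invoked in the proof of Theorem \ref{th:as.norm}) gives $T^{-1/2}N_T \tod \Normal(0,I)$. Slutsky's lemma then delivers $\sqrt T(\check b_T - b) \tod \Normal(0,\sigma^2 I/I^2) = \Normal(0,\sigma^2/I)$, which is the stated limit. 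No genuinely new obstacle appears beyond what has already been handled in Theorems \ref{th:consist}--\ref{th:as.norm}; the only minor care is verifying that the $a$-terms indeed cancel in the error expression, which is what makes this single-parameter case substantially simpler than the joint one.
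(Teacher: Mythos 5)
Your proposal is correct and follows essentially the same route as the paper: the paper's proof likewise reduces everything to the representation $\check b_T - b = -\sigma\int_0^T r_t^{1-\beta}\,dW_t \big/ \int_0^T r_t^{2-2\beta}\,dt$, notes that the denominator is the quadratic variation of the martingale in the numerator, and invokes the strong law of large numbers and central limit theorem for martingales together with the ergodic convergence \eqref{equ:conv2}. The only difference is that you spell out the derivation of the estimator and the Slutsky step, which the paper omits as ``similar to the two-parameter case.''
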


\begin{proof}
Similarly to \eqref{equ:est-a-2}--\eqref{equ:est-b-2}
we can represent  the difference $\check{b}_T - b$ in the following form:
\[
\check{b}_T - b = -\sigma\, \frac{\int_0^T r_t^{1-\beta}\,dW_t}{\int_0^T r_t^{2-2\beta}\,dt},
\]
where the numerator is a martingale, and the denominator is its quadratic variation.
Taking into account \eqref{equ:conv2}, we see that the strong consistency follows from the strong law of large numbers for martingales, and the asymptotic normality is a consequence of the corresponding central limit theorem.
\end{proof}

\subsection{Estimation of $a$ when $b$ is known}

\begin{thm}\label{thm:a-known}
Let $b$ be known. The maximum likelihood estimator for $a$ is
\[
\check{a}_T=
\frac{b\int_0^T r_t^{1-2\beta}\,dt + \int_0^T r_t^{-2\beta}\,dr_t}{\int_0^T r_t^{-2\beta}\,dt}
\]
It is strongly consistent and asymptotically normal:
\[
\sqrt{T}\left(\check{a}_T - a\right) \tod \Normal\left(0,
\frac{\sigma^2}{\int_0^\infty x^{-2\beta} p_\infty(x)\,dx} \right).
\]
\end{thm}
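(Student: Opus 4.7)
The plan is to mirror the argument given for Theorem \ref{thm:b-known}. Since $b$ is now a fixed constant, the likelihood function still has the form \eqref{equ:2}, and the formula for $\check{a}_T$ follows at once by setting the partial derivative in $a$ (equation \eqref{equ:3}) equal to zero and solving the resulting linear equation. The second-derivative entry $(\log\tilde{\mathcal L})_{aa}^{\prime\prime}=-\sigma^{-2}\int_0^T r_t^{-2\beta}\,dt$ is strictly negative, so $\check{a}_T$ is indeed a maximizer.

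Next I would derive a tractable expression for the estimation error. Substituting $dr_t=(a-br_t)\,dt+\sigma r_t^{\beta}\,dW_t$ from \eqref{equ:1} into the term $\int_0^T r_t^{-2\beta}\,dr_t$ in the numerator of $\check{a}_T$, the deterministic contribution cancels exactly against $b\int_0^T r_t^{1-2\beta}\,dt$ and $-a\int_0^T r_t^{-2\beta}\,dt$, leaving
\[
\check{a}_T - a \;=\; \sigma\,\frac{\int_0^T r_t^{-\beta}\,dW_t}{\int_0^T r_t^{-2\beta}\,dt}.
\]
The numerator is a locally square integrable martingale whose quadratic variation is precisely the denominator; this is the key structural fact on which the rest of the argument rests.

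Strong consistency then follows from the strong law of large numbers for martingales, exactly as in Theorem \ref{th:consist}: by \eqref{equ:conv1} the quadratic variation diverges a.s., and by Proposition \ref{prop1}\ref{st4} its normalization $T^{-1}\int_0^T r_t^{-2\beta}\,dt$ converges a.s.\ to the finite positive limit $\int_0^\infty x^{-2\beta}p_\infty(x)\,dx$. For asymptotic normality I would write
\[
\sqrt{T}\bigl(\check{a}_T - a\bigr) \;=\; \sigma\,\frac{T^{-1/2}\int_0^T r_t^{-\beta}\,dW_t}{T^{-1}\int_0^T r_t^{-2\beta}\,dt},
\]
apply the CLT for continuous local martingales to the numerator (whose quadratic variation $T^{-1}\int_0^T r_t^{-2\beta}\,dt$ has the above a.s.\ limit), and combine with \eqref{equ:conv1} via Slutsky's theorem to obtain the claimed Gaussian limit with variance $\sigma^2\bigl/\int_0^\infty x^{-2\beta}p_\infty(x)\,dx$.

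I do not foresee any substantive obstacle: this is a one-parameter specialization of the two-parameter argument and invokes only ingredients already in hand, namely ergodicity (Corollary \ref{cor:conv}), finiteness of the negative moment $\int_0^\infty x^{-2\beta}p_\infty(x)\,dx$ (Proposition \ref{prop1}\ref{st4}), and the standard martingale limit theorems. The only point worth double-checking is the strict positivity of this limiting integral, which is immediate since $p_\infty$ is a positive density on $(0,\infty)$.
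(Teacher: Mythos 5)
Your proposal is correct and follows essentially the same route as the paper: the paper likewise reduces the problem to the representation $\check{a}_T - a = \sigma\int_0^T r_t^{-\beta}\,dW_t \big/ \int_0^T r_t^{-2\beta}\,dt$, notes that the numerator is a martingale with the denominator as its quadratic variation, and invokes the strong law of large numbers and the central limit theorem for martingales exactly as you do. The only difference is that you spell out the derivation of the estimator from the first-order condition and the Slutsky step, which the paper leaves implicit by reference to Theorem~\ref{thm:b-known}.
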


The proof is conducted similarly to that of Theorem~\ref{thm:b-known},
taking into account the representation
\[
\check{a}_T - a = \sigma\, \frac{\int_0^T r_t^{-\beta}\,dW_t}{\int_0^T r_t^{-2\beta}\,dt}.
\]

\begin{remark}
The results of Section~\ref{sec:mle} remain true in the case $\beta=\frac12$ under the additional condition $2a>\sigma^2$.
In this case $p_\infty(x)$ has the form
\[
p_{\infty}(x)=\frac{\beta^\alpha}{\Gamma(\alpha)}x^{\alpha-1}e^{-\beta x}, \quad x>0, \quad \text{with }\alpha=\frac{2a}{\sigma^2},\;\beta=\frac{2b}{\sigma^2}.
\]
The details can be found in \cite{BenAlaya2012,BenAlaya2013}, where the asymptotic distributions of the estimators in the boundary cases $b=0$ and $2a=\sigma^2$ are investigated as well.
\end{remark}

\section{An alternative approach to drift parameters estimation}
In this section we generalize an approach to drift parameters estimation proposed in \cite{DMR} for the CIR process.
We start with two auxiliary lemmas.
\label{sec:alt}
	\begin{lem}\label{conv31}
One has the following convergence
\begin{align}\label{equ:A}
    \frac{1}{T}\int_{0}^{T}r_t dt \to \frac{a}{b} \quad\text{a.\,s., as }T\rightarrow\infty.
\end{align}
	\end{lem}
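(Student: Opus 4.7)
The plan is to combine the ergodic theorem from Corollary~\ref{cor:conv} with the integrated form of the SDE~\eqref{equ:1}. Applying Corollary~\ref{cor:conv} with $\mu=1$ already yields $\frac{1}{T}\int_0^T r_t\,dt \to \int_0^\infty x\,p_\infty(x)\,dx$ a.\,s., so the task reduces to identifying this integral with $a/b$.

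For that identification, I would integrate \eqref{equ:1} over $[0,T]$ to obtain $r_T - r_0 = aT - b\int_0^T r_t\,dt + \sigma M_T$, where $M_T := \int_0^T r_t^\beta\,dW_t$ is a continuous local martingale with quadratic variation $\langle M\rangle_T = \int_0^T r_t^{2\beta}\,dt$. Rearranging gives the identity $\frac{1}{T}\int_0^T r_t\,dt = \frac{a}{b} - \frac{r_T - r_0}{bT} + \frac{\sigma M_T}{bT}$. By Corollary~\ref{cor:conv} applied with $\mu = 2\beta$, we have $\langle M\rangle_T/T \to \int_0^\infty x^{2\beta}p_\infty(x)\,dx \in (0,\infty)$ a.\,s., so $\langle M\rangle_T\to\infty$ a.\,s.; the strong law of large numbers for locally square integrable martingales (the same tool already used in the proof of Theorem~\ref{th:consist}) then gives $M_T/\langle M\rangle_T \to 0$ a.\,s., and hence $M_T/T \to 0$ a.\,s.

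It remains to show $r_T/T \to 0$ a.\,s., which I expect to be the main obstacle. From the rearranged identity, once the martingale term and the time average have a.\,s.\ limits, the ratio $r_T/T$ must also admit an a.\,s.\ limit. This limit is nonnegative since $r_t > 0$ a.\,s.\ by Proposition~\ref{prop1}, and it cannot be strictly positive: as an ergodic diffusion (again Proposition~\ref{prop1}), $r$ is recurrent and visits every neighborhood of any fixed positive level at arbitrarily large times, so $\liminf_{T\to\infty} r_T < \infty$ a.\,s., which is incompatible with $r_T$ growing linearly in $T$. Therefore $r_T/T\to 0$ a.\,s., and passing to the limit in the rearranged identity yields $\frac{1}{T}\int_0^T r_t\,dt \to a/b$ a.\,s., as claimed.
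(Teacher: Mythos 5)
Your proof is correct, but the identification of the limit with $a/b$ is done by a genuinely different route than in the paper. The paper also starts from Corollary~\ref{cor:conv} with $\mu=1$, but then computes $\int_0^\infty x\,p_\infty(x)\,dx$ directly: an integration by parts applied to the explicit stationary density, together with the normalization \eqref{equ:G}, yields the value $a/b$ in closed form. You instead extract the same identity from the dynamics: integrating \eqref{equ:1}, killing the martingale term via the strong law of large numbers for locally square integrable martingales (with $\langle M\rangle_T/T$ controlled by Corollary~\ref{cor:conv} at $\mu=2\beta$ and Proposition~\ref{prop1}(\ref{st4})), and ruling out linear growth of $r_T$ by recurrence of the ergodic diffusion. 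Both arguments are complete; note that for the last step an even shorter contradiction is available, since $r_T/T\to L>0$ would force $\frac1T\int_0^T r_t\,dt\to\infty$, contradicting the finite ergodic limit you already established. Your approach has the virtue of not touching the explicit form of $p_\infty$ at all, so it would survive in models where the stationary density is not available in closed form; the paper's computation, on the other hand, is the template reused in Lemma~\ref{conv32}, where the analogous moment identity for $\int_0^\infty x^{3-2\beta}p_\infty(x)\,dx$ is again obtained by integration by parts --- reproducing that one by your method would require applying It\^o's formula to $r_t^{2-2\beta}$ rather than just integrating the equation.
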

\begin{proof}
By Corollary \ref{cor:conv},
\begin{align*}
    \frac{1}{T}\int_{0}^{T}r_t dt \to \int_{0}^{\infty}x p_{\infty}(x)dx \quad\text{a.\,s., as }T\rightarrow\infty.
\end{align*}
Integrating by parts, we obtain
\begin{align}\label{equ:B}
		\int_{0}^{\infty}x p_{\infty}(x)dx &= G \int_{0}^{\infty}x^{1-2\beta} \exp\set{\frac{2}{\sigma^2} \left (\frac{ax^{1-2\beta}}{1-2\beta}-\frac{bx^{2-2\beta}}{2-2\beta} \right )}dx {}\nonumber\\
		& =-\frac{G\sigma^2}{2b}\int_{0}^{\infty} \exp\set{ \frac{2}{\sigma^2}\cdot\frac{ax^{1-2\beta}}{1-2\beta}}d\exp\set{-\frac{2}{\sigma^2}\cdot\frac{bx^{2-2\beta}}{2-2\beta}} {}\nonumber\\
		&= -\frac{G\sigma^2}{2b}\cdot\exp\set{\frac{2}{\sigma^2} \left (\frac{ax^{1-2\beta}}{1-2\beta}-\frac{bx^{2-2\beta}}{2-2\beta} \right )}\bigg|_{x=0}^{\infty}
		{}\nonumber\\
		&+ \frac{G\sigma^2}{2b}\int_{0}^{\infty}\exp\set{ -\frac{2}{\sigma^2}\frac{bx^{2-2\beta}}{2-2\beta}\cdot}d\exp\set{\frac{2}{\sigma^2}\cdot\frac{ax^{1-2\beta}}{1-2\beta}}
		{}\nonumber\\
		&= \frac{Ga}{b}\int_{0}^{\infty}x^{-2\beta} \exp\set{\frac{2}{\sigma^2} \left (\frac{ax^{1-2\beta}}{1-2\beta}-\frac{bx^{2-2\beta}}{2-2\beta} \right )}dx = \frac{a}{b},
\end{align}
where the last equality follows from \eqref{equ:G}.
\end{proof}

	\begin{lem}\label{conv32}
One has the following convergence
\begin{align}\label{equ:C}
    \frac{1}{T}\int_{0}^{T}r_t^{3-2\beta} dt -
    \frac{a}{bT}\int_{0}^{T}r_t^{2-2\beta} dt \to \frac{\sigma^{2}(1-\beta)a}{b^2} \quad\text{a.\,s., as }T\rightarrow\infty.
\end{align}
	\end{lem}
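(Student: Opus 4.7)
My plan is to reduce the problem to a deterministic moment identity for the stationary density $p_\infty$, then prove that identity by an integration-by-parts calculation in the spirit of Lemma \ref{conv31}. By Corollary \ref{cor:conv}, each of $T^{-1}\int_0^T r_t^{3-2\beta}\,dt$ and $T^{-1}\int_0^T r_t^{2-2\beta}\,dt$ converges a.s.\ to the corresponding stationary moment, both of which are finite by Proposition \ref{prop1}\ref{st4}. So it suffices to show
\[
\int_0^\infty x^{3-2\beta} p_\infty(x)\,dx - \frac{a}{b}\int_0^\infty x^{2-2\beta} p_\infty(x)\,dx = \frac{(1-\beta)\sigma^2 a}{b^2}.
\]

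The key idea is to exploit the fact that, writing $F(x) = \exp\bigl\{\frac{2}{\sigma^2}\bigl(\frac{ax^{1-2\beta}}{1-2\beta}-\frac{bx^{2-2\beta}}{2-2\beta}\bigr)\bigr\}$ so that $p_\infty(x) = Gx^{-2\beta}F(x)$, one has
\[
dF(x) = \frac{2}{\sigma^2}\bigl(ax^{-2\beta} - bx^{1-2\beta}\bigr)F(x)\,dx,
\]
which is precisely the algebraic combination that couples well with $p_\infty$. I would then evaluate $\int_0^\infty x^{2-2\beta}\,dF(x)$ in two ways: substituting the above differential gives, after using $F(x) = x^{2\beta}p_\infty(x)/G$,
\[
\int_0^\infty x^{2-2\beta}\,dF(x) = \frac{2}{\sigma^2 G}\int_0^\infty \bigl(ax^{2-2\beta} - bx^{3-2\beta}\bigr)p_\infty(x)\,dx;
\]
integrating by parts gives
\[
\int_0^\infty x^{2-2\beta}\,dF(x) = \bigl[x^{2-2\beta}F(x)\bigr]_0^\infty - (2-2\beta)\int_0^\infty x^{1-2\beta}F(x)\,dx = -\frac{2(1-\beta)}{G}\int_0^\infty x\, p_\infty(x)\,dx,
\]
where the boundary term vanishes and the remaining integral equals $a/b$ by \eqref{equ:B} in Lemma \ref{conv31}. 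Equating the two expressions and rearranging yields exactly the required identity.

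The main obstacle, modest as it is, lies in verifying the boundary conditions for the integration by parts. At $x \to \infty$, $F(x)$ decays like $\exp(-c x^{2-2\beta})$, so $x^{2-2\beta}F(x)\to 0$; at $x \to 0^+$, the assumption $\beta > \tfrac12$ makes $\frac{ax^{1-2\beta}}{1-2\beta}\to -\infty$, forcing $F$ to vanish faster than any algebraic factor and ensuring $x^{2-2\beta}F(x)\to 0$ there as well. Once these standard bounds are noted and finiteness of all moments is invoked (Proposition \ref{prop1}\ref{st4}), the rest of the argument is bookkeeping identical to the one carried out in \eqref{equ:B}.
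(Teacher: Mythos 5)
Your proof is correct and follows essentially the same route as the paper: reduce via Corollary \ref{cor:conv} to a moment identity for $p_\infty$, establish it by integration by parts with vanishing boundary terms, and close the computation using the identity $\int_0^\infty x\,p_\infty(x)\,dx = a/b$ from Lemma \ref{conv31}. The only difference is cosmetic --- you differentiate the full exponential factor $F$ whereas the paper peels off only the $b$-part of the exponent --- and both yield the same identity.
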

\begin{proof}
By Corollary \ref{cor:conv},
\begin{align}\label{equ:D}
    \frac{1}{T}\int_{0}^{T}r_t^{3-2\beta} dt -
    \frac{a}{bT}\int_{0}^{T}r_t^{2-2\beta} dt \to \int_{0}^{\infty}x^{3-2\beta} p_{\infty}(x)dx -
    \frac{a}{b}\int_{0}^{\infty}x^{2-2\beta} p_{\infty}(x)dx
\end{align}
a.\,s., as $T\rightarrow\infty$.
Using integration-by-parts, we get
\begin{align}\label{equ:E}
\MoveEqLeft
		\int_{0}^{\infty}x^{3-2\beta} p_{\infty}(x)dx = G \int_{0}^{\infty}x^{3-4\beta} \exp\set{\frac{2}{\sigma^2} \left (\frac{ax^{1-2\beta}}{1-2\beta}-\frac{bx^{2-2\beta}}{2-2\beta} \right )}dx {}\nonumber\\
		& =-\frac{G\sigma^2}{2b}\int_{0}^{\infty}x^{2-2\beta} \exp\set{ \frac{2}{\sigma^2}\cdot\frac{ax^{1-2\beta}}{1-2\beta}}d\exp\set{-\frac{2}{\sigma^2}\cdot\frac{bx^{2-2\beta}}{2-2\beta}} {}\nonumber\\
		& =\frac{G\sigma^2}{2b}\int_{0}^{\infty}\exp\set{- \frac{2}{\sigma^2}\cdot\frac{bx^{2-2\beta}}{2-2\beta} }d\left (x^{2-2\beta}\exp\set{\frac{2}{\sigma^2}\cdot\frac{ax^{1-2\beta}}{1-2\beta}}\right )
		{}\nonumber\\	
		& =\frac{G\sigma^2}{2b}(2-2\beta)\int_{0}^{\infty}x^{1-2\beta}\exp\set{\frac{2}{\sigma^2} \left (\frac{ax^{1-2\beta}}{1-2\beta}-\frac{bx^{2-2\beta}}{2-2\beta} \right )}dx
		{}\nonumber\\
		&\quad+ \frac{Ga}{b}\int_{0}^{\infty}x^{2-4\beta}\exp\set{\frac{2}{\sigma^2} \left (\frac{ax^{1-2\beta}}{1-2\beta}-\frac{bx^{2-2\beta}}{2-2\beta} \right )}dx
		{}\nonumber\\
		&= \frac{\sigma^{2}a(1-\beta)}{b^2}+\frac{a}{b}\int_{0}^{\infty}x^{2-2\beta} p_{\infty}(x)dx,
\end{align}
where the last equality follows from \eqref{equ:B}.

Combining \eqref{equ:D} and \eqref{equ:E} we conclude the proof.
\end{proof}

The convergences \eqref{equ:A} and \eqref{equ:C} enable us to construct the estimators of the drift parameters by solving the corresponding system of equations. We obtain the following estimators:
\begin{gather*}
	     \tilde{a}_T
	     =\frac{\sigma^{2}(1-\beta)\big(\int_{0}^{T}{r_t}dt\big)^2} {T\int_{0}^{T}r_t^{3-2\beta}dt-  \int_{0}^{T} r_tdt \cdot \int_{0}^{T}{r_t^{2-2\beta}dt}};
	\\
	      \tilde{b}_T =\frac{\sigma^{2}(1-\beta)T\int_{0}^{T} {r_t} dt }{T\int_{0}^{T}r_t^{3-2\beta}dt-  \int_{0}^{T} r_tdt \cdot \int_{0}^{T}{r_t^{2-2\beta}dt}}.
\end{gather*}

\begin{thm}\label{th:consist1}
 $(\tilde{a}_T,\tilde{b}_T)$ is a strongly consistent estimator of the parameter $(a, b)$
\end{thm}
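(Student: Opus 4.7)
The plan is to divide numerator and denominator of each estimator by $T^2$, identify the a.\,s.\ limits using Lemmas~\ref{conv31}, \ref{conv32} and Corollary~\ref{cor:conv}, and then apply the continuous mapping theorem (once we verify the limiting denominator is nonzero).

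First I would treat the common denominator $D_T := T\int_0^T r_t^{3-2\beta}\,dt - \int_0^T r_t\,dt \cdot \int_0^T r_t^{2-2\beta}\,dt$. The natural scaling is $T^{-2}$, giving
\[
\frac{D_T}{T^2} = \frac{1}{T}\int_0^T r_t^{3-2\beta}\,dt - \frac{1}{T}\int_0^T r_t\,dt \cdot \frac{1}{T}\int_0^T r_t^{2-2\beta}\,dt.
\]
This does not fit Lemma~\ref{conv32} directly, but I would rewrite it by adding and subtracting $\frac{a}{bT}\int_0^T r_t^{2-2\beta}\,dt$:
\[
\frac{D_T}{T^2} = \left(\frac{1}{T}\int_0^T r_t^{3-2\beta}\,dt - \frac{a}{bT}\int_0^T r_t^{2-2\beta}\,dt\right) + \left(\frac{a}{b} - \frac{1}{T}\int_0^T r_t\,dt\right)\cdot\frac{1}{T}\int_0^T r_t^{2-2\beta}\,dt.
\]
By Lemma~\ref{conv32} the first bracket tends a.\,s.\ to $\sigma^2(1-\beta)a/b^2$, while by Lemma~\ref{conv31} the first factor of the second term tends a.\,s.\ to $0$ and the second factor converges a.\,s.\ to the finite limit $\int_0^\infty x^{2-2\beta}p_\infty(x)\,dx$ (Corollary~\ref{cor:conv} combined with Proposition~\ref{prop1}\ref{st4}). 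Hence $D_T/T^2 \to \sigma^2(1-\beta)a/b^2>0$ a.\,s.

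For the numerators, I would use Lemma~\ref{conv31} directly: $T^{-2}\bigl(\int_0^T r_t\,dt\bigr)^2 \to (a/b)^2$ a.\,s., and $T^{-1}\int_0^T r_t\,dt \to a/b$ a.\,s., so
\[
\frac{\text{numerator of }\tilde a_T}{T^2}\to \sigma^2(1-\beta)\frac{a^2}{b^2},\qquad
\frac{\text{numerator of }\tilde b_T}{T^2}\to \sigma^2(1-\beta)\frac{a}{b}.
\]
Dividing by the limit of $D_T/T^2$ then gives $\tilde a_T\to a$ and $\tilde b_T\to b$ a.\,s.

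The only genuine obstacle is the algebraic rearrangement of $D_T/T^2$: the two naive limits of its summands both diverge structurally toward the same finite value, so the combination must be handled as a difference of terms whose leading-order cancellation is precisely what Lemma~\ref{conv32} isolates. Everything else is routine application of ergodicity together with the positivity $\sigma^2(1-\beta)a/b^2>0$ (which ensures the ratios are well-defined in the limit).
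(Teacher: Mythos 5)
Your proof is correct and follows essentially the same route as the paper: the paper's (much terser) argument likewise combines Lemmas~\ref{conv31} and \ref{conv32} to identify the a.\,s.\ limit of the rescaled denominator as $\sigma^{2}(1-\beta)a/b^{2}$ and then concludes from Lemma~\ref{conv31}. You have merely made explicit the add-and-subtract step and the boundedness of $\frac1T\int_0^T r_t^{2-2\beta}\,dt$ that the paper leaves implicit.
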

\begin{proof}
Lemmas \ref{conv31} and \ref{conv32} imply that
\begin{align*}
    \frac{1}{T}\int_{0}^{T}r_t^{3-2\beta} dt -
    \frac{1}{T^2}\int_{0}^{T}r_t dt\cdot\int_{0}^{T}r_t^{2-2\beta} dt \to \frac{\sigma^{2}(1-\beta)a}{b^2} \quad\text{a.\,s., as }T\rightarrow\infty.
\end{align*}
Now the proof follows from Lemma \ref{conv31}.
\end{proof}

\begin{remark}
Theorem \ref{th:consist1} remains true if $\beta = \frac{1}{2}$. This case was studied in \cite[Thm.5]{DMR}.
\end{remark}

\section{Estimation of $\beta$ and $\sigma$}
\label{sec:diff}

It turns out that if we observe the whole path $\set{r_t, t \in[0, T]}$, then the parameters $\beta$ and $\sigma$ can be obtained explicitly from the observations by considering realized quadratic variations.
First, we assume that the parameter $\sigma$ is known.
The next proposition gives the value of $\beta$.
\begin{prop}
For any $t\in[0,T]$,
\begin{equation}\label{equ:beta-est1}
\beta = \lim_{h\to 0}\frac{\log\left (\frac{[r]_{t+h}-[r]_{t}}{\sigma^2 h}\right )}{2\log r_{t}}
\quad\text{a.\,s.,}
\end{equation}
where
\[
[r]_t = \lim_{n\to\infty}\sum_{k=1}^{2^n} \left(r_{\frac{kt}{2^n}}- r_{\frac{(k-1)t}{2^n}}\right)^2.
\]
\end{prop}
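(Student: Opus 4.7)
The argument rests on two elementary observations: the quadratic variation of $r$ can be read off directly from the SDE, and the sample paths of $r$ are continuous. Since the drift part in \eqref{equ:1} has finite variation and the martingale part is $\int_0^t \sigma r_s^\beta\,dW_s$, the standard semimartingale calculus gives
\[
[r]_t = \sigma^2 \int_0^t r_s^{2\beta}\,ds,
\]
and consequently
\[
\frac{[r]_{t+h} - [r]_t}{\sigma^2 h} = \frac{1}{h}\int_t^{t+h} r_s^{2\beta}\,ds.
\]

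By Proposition~\ref{prop1}, parts~\ref{st1} and~\ref{st2}, the process $r$ has continuous and a.s.\ strictly positive trajectories, so the map $s \mapsto r_s^{2\beta}$ is continuous at $s = t$. The fundamental theorem of calculus applied pathwise then yields
\[
\lim_{h \to 0}\frac{1}{h}\int_t^{t+h} r_s^{2\beta}\,ds = r_t^{2\beta} \quad \text{a.s.,}
\]
whence $\log\!\bigl(([r]_{t+h} - [r]_t)/(\sigma^2 h)\bigr) \to 2\beta \log r_t$ a.s. Dividing by $2\log r_t$ gives the claimed identity.

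The main subtlety is the division by $2\log r_t$, which requires $r_t \neq 1$. For each fixed $t > 0$ the law of $r_t$ is absolutely continuous (the nondegenerate ergodic diffusion admits a smooth transition density, as already reflected in the stationary density of part~\ref{st3} of Proposition~\ref{prop1}), so $\prob(r_t = 1) = 0$ and the division is legitimate almost surely. Apart from this caveat the result is essentially a pathwise continuity statement, and the only genuinely stochastic input is the semimartingale identity for $[r]_t$.
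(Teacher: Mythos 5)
Your proof is correct and follows essentially the same route as the paper: identify $[r]_t=\sigma^2\int_0^t r_s^{2\beta}\,ds$ from the semimartingale decomposition, use path continuity and positivity to get $\frac{1}{h}\int_t^{t+h}r_s^{2\beta}\,ds\to r_t^{2\beta}$ a.s., and take logarithms. Your added remark that the division by $2\log r_t$ requires $r_t\neq 1$ (which holds a.s.\ for fixed $t>0$) is a point the paper only acknowledges informally in a later remark, so it is a welcome refinement rather than a deviation.
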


\begin{proof}
Since
\begin{equation}\label{equ:4-1}
[r]_t = \sigma^2 \int_0^t r_s^{2\beta} \, ds,
\end{equation}
we see that
\begin{equation}\label{equ:4-2}
\frac{[r]_{t+h}-[r]_{t}}{h}
\to \sigma^2 r_{t}^{2\beta},
\quad\text{a.\,s.\ as } h \to 0.
\end{equation}
Therefore
\begin{equation}\label{equ:beta-est1a}
\log\left (\frac{[r]_{t+h}-[r]_{h}}{\sigma^2h}\right )
\to  \beta \log r_{t}^2,
\quad\text{a.\,s.\ as } h \to 0,
\end{equation}
which is equivalent to \eqref{equ:beta-est1}.
\end{proof}

Now let $\sigma$ be unknown. In this case we can consider \eqref{equ:4-2} at two different times $t$ and $s$, this allows us to exclude the unknown parameter $\sigma$ and to obtain the following result.
\begin{prop}
For any $s,t\in[0,T]$, $s\ne t$,
\begin{equation}\label{equ:beta-est2}
\beta = \lim_{h\to 0}\frac{\log\left (\frac{[r]_{t+h}-[r]_{t}}{[r]_{s+h}-[r]_{s}}\right )}{2\log(r_{t}/r_s)}
\quad\text{a.\,s.}
\end{equation}
\end{prop}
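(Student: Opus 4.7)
The plan is to mimic the previous proposition's argument, but apply the key limit \eqref{equ:4-2} simultaneously at the two time points $t$ and $s$ so that the unknown $\sigma^2$ factor cancels when we form the ratio.

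First, I would write the ratio of quadratic-variation increments as
\[
\frac{[r]_{t+h}-[r]_{t}}{[r]_{s+h}-[r]_{s}}
= \frac{\bigl([r]_{t+h}-[r]_{t}\bigr)/h}{\bigl([r]_{s+h}-[r]_{s}\bigr)/h}.
\]
By \eqref{equ:4-2}, the numerator of the right-hand side converges a.s.\ to $\sigma^2 r_t^{2\beta}$ and the denominator to $\sigma^2 r_s^{2\beta}$. Since $r$ is a.s.\ strictly positive by Proposition~\ref{prop1}\ref{st2}, the denominator's limit is a.s.\ nonzero, so the quotient converges a.s.\ to $(r_t/r_s)^{2\beta}$, with $\sigma^2$ cancelling. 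Taking logarithms and dividing by $2\log(r_t/r_s)$ then yields $\beta$, exactly as in \eqref{equ:beta-est1a}.

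The one subtle point, and probably the only real obstacle, is that the denominator $2\log(r_t/r_s)$ must be nonzero for the final division to be legitimate. For fixed $s\neq t$, the vector $(r_s,r_t)$ has an absolutely continuous joint distribution (the CKLS diffusion is non-degenerate and has smooth transition densities by standard diffusion theory), so $\prob(r_s=r_t)=0$. On the complementary full-measure event the argument above goes through verbatim, which delivers the a.s.\ equality asserted in \eqref{equ:beta-est2}.
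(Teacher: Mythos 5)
Your proof is correct and follows essentially the same route as the paper: apply the limit \eqref{equ:4-2} at both times $t$ and $s$, form the ratio so that $\sigma^2$ cancels, and take logarithms. The only addition is your (welcome) observation that $\prob(r_s=r_t)=0$ is needed to divide by $2\log(r_t/r_s)$, a point the paper's one-line proof leaves implicit.
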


\begin{proof}
The convergence \eqref{equ:4-2} yields
\[
\frac{[r]_{t+h}-[r]_{t}}{[r]_{s+h}-[r]_{s}}
\to \left(\frac{r_{t}}{r_{s}}\right)^{2\beta},
\quad\text{a.\,s.\ as } h \to 0,
\]
whence the proof follows.
\end{proof}

Finally, after $\beta$ is known, the parameter $\sigma$ can be evaluated using either \eqref{equ:4-1} or \eqref{equ:4-2}.
\begin{prop}
For any $t\in[0,T]$,
\begin{equation}\label{equ:sigma-est}
\sigma^2 = \frac{[r]_t}{\int_0^t r_s^{2\beta}\,ds}
=\lim_{h\to 0}\frac{[r]_{t+h}-[r]_{t}}{h r_t^{2\beta}}
\quad\text{a.\,s.}
\end{equation}
\end{prop}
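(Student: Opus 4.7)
The plan is to read off both equalities directly from results already established in the preceding two propositions, specifically equations \eqref{equ:4-1} and \eqref{equ:4-2}, together with the strict positivity of $r$ from Proposition~\ref{prop1}\ref{st2}.

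For the first equality, I would start from the identity
\[
[r]_t = \sigma^2 \int_0^t r_s^{2\beta}\,ds
\]
in \eqref{equ:4-1}. Since $r$ is a.s.\ strictly positive by Proposition~\ref{prop1}\ref{st2} and has continuous sample paths, the integral $\int_0^t r_s^{2\beta}\,ds$ is a.s.\ strictly positive for every $t>0$. Dividing both sides by this integral yields
\[
\sigma^2 = \frac{[r]_t}{\int_0^t r_s^{2\beta}\,ds} \quad\text{a.\,s.}
\]

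For the second equality I would invoke \eqref{equ:4-2}, which gives
\[
\frac{[r]_{t+h}-[r]_t}{h} \to \sigma^2 r_t^{2\beta} \quad\text{a.\,s.\ as }h\to0.
\]
Again using that $r_t>0$ a.s., I would divide through by $r_t^{2\beta}$, which is a deterministic (up to the path) positive factor, to obtain
\[
\lim_{h\to 0}\frac{[r]_{t+h}-[r]_{t}}{h\, r_t^{2\beta}} = \sigma^2 \quad\text{a.\,s.}
\]

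There is essentially no obstacle here since the statement is a direct algebraic rearrangement of formulas already proved; the only thing to be careful about is noting that the divisors $\int_0^t r_s^{2\beta}\,ds$ and $r_t^{2\beta}$ are a.s.\ nonzero, which is immediate from the strict positivity of the process guaranteed by Proposition~\ref{prop1}\ref{st2}.
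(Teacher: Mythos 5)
Your proposal is correct and matches the paper exactly: the paper gives no separate proof for this proposition, merely remarking that $\sigma^2$ can be evaluated using either \eqref{equ:4-1} or \eqref{equ:4-2}, which is precisely the two-line rearrangement you carry out. Your added care about the a.s.\ positivity of the divisors is a harmless (and correct) elaboration of what the paper leaves implicit.
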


\begin{remark}
Let us give some practical recommendations concerning evaluation of $\beta$ and $\sigma$ from data. Note that the right-hand side of \eqref{equ:beta-est1} can be calculated using the values of the process $r$ in the neighborhood of any $t\in[0,T]$. To avoid large estimation errors, the value of $r_t$ should not be close to $1$ for chosen $t$. In order to obtain better numerical results we recommend to take several distinct points $t_1,\dots, t_m$ and use the following quantity instead of \eqref{equ:beta-est1}: 
\[
\hat\beta_1 (h) = \frac{\sum_{i=1}^m \abs{\log\left (\frac{[r]_{t_i+h}-[r]_{t_i}}{\sigma^2 h}\right )}}{2\sum_{i=1}^m\abs{\log r_{t_i}}}.
\] 
The convergence $\hat\beta_1 (h)\to 0$, $h\to0$, follows from \eqref{equ:beta-est1a}.
Compared to an average value of the estimators obtained from \eqref{equ:beta-est1} at points  $\set{t_i}$,
the estimator $\hat\beta_1 (h)$ performs much better in the situation when some values of $r_{t_i}$ may be close to~1.
Similarly, the following estimators may be used instead of \eqref{equ:beta-est2} and \eqref{equ:sigma-est}:
\[
\hat\beta_2 (h) = \frac{\sum_{i=1}^m\abs{\log\left (\frac{[r]_{t_i+h}-[r]_{t_i}}{[r]_{s_i+h}-[r]_{s_i}}\right )}}{2\sum_{i=1}^m\abs{\log (r_{t_i}/r_{s_i})}},
\qquad
\hat\sigma^2(h) = \frac{\sum_{i=1}^m([r]_{t_i+h}-[r]_{t_i})}{h \sum_{i=1}^m r_{t_i}^{2\beta}}.
\]
\end{remark}

\begin{remark}
An alternative method for evaluation of $\beta$ and $\sigma$ was developed by \cite{Dokuchaev17}.
\end{remark}

\section{Simulation}
\label{sec:simul}

For numerical simulation, we take $a=3$, $b=2$, $\sigma=1$, $r_0=0$ in each procedure.
For each value of $\beta\in\set{0.5,0.6,0.7,0.8,0.9}$, we generate 100 sample paths of the solution $r=\set{r_t,t\in[0,T]}$ to the equation \eqref{equ:1} using Euler's approximation.
The sample means and standard deviations of all estimators for the parameters $a$ and $b$ at the times $T=50,100,150,200$ are reported in Table~\ref{tab:1}.
Observe that all estimators converge to the true values of parameters, however, in most cases the maximum likelihood estimators perform better than the alternative ones. The best results are obtained for $\check a$ and $\check b$, the maximum likelihood estimators in the case when one of the parameters in known.

In Table~\ref{tab:2} we investigate the behavior of the estimators for the parameters $\beta$ and $\sigma$. In order to calculate realized quadratic variation we used a partition with the step $2^{-14}$. The small parameter $h$ was set to $2^{-6}$. In order to evaluate $\hat\beta_1$ and $\hat\sigma^2$ we used eight points $t_i = i/8$, $i=1,\dots, 8$. The estimator $\hat\beta_2$ was calculated using the pairs $s_i = i/16$, $t_i = (i+8)/16$, $i=1,\dots,8$.

\setlength{\tabcolsep}{3.8pt}
\begin{table}
\centering
\caption{Estimates of $a$ and $b$\label{tab:1}}
\footnotesize
\begin{tabular}{Q @{\hspace{13pt}} *{6}{Q}@{\hspace{12pt}}*{5}{Q}}\toprule
&& T &&&&&& T &&&\\
\cmidrule(llll){3-6}\cmidrule(l){9-12}
&&   50 & 100 & 150 & 200 && &  50 & 100 & 150 & 200\\\otoprule
\beta = 0.5& \E [\hat a_T]  &  3.104 &  3.023 &  3.023 &   3.030
&& \E [\hat b_T]   &  2.077 &  2.022 &  2.027 &  2.018\\
&   \var[\hat a_T] &   0.182 &  0.059 &  0.047 &  0.049
&&    \var[\hat b_T] &  0.092 &   0.030 &  0.029 &  0.025 \\
\addlinespace
& \E [\check a_T]  &  3.005 &  2.997 &   2.990 &  3.007
&& \E [\check b_T] &  2.008 &  2.007 &  2.011 &  1.998\\
& \var[\check a_T] &  0.024 &  0.016 &  0.007 &  0.008
&&\var[\check b_T] & 0.012 &  0.008 &  0.004 &  0.004\\
\addlinespace
& \E [\tilde a_T]   &  3.197 &  2.993 &  3.043 &  2.998
&&\E [\tilde b_T]  &   2.160 &  1.993 &  2.037 &  2.001\\
& \var[\tilde a_T]  &  0.463 &  0.307 &  0.276 &   0.220
&&    \var[\tilde b_T] & 0.306 &  0.222 &  0.204 &  0.159\\
\midrule

\beta = 0.6&\E [\hat a_T] &   3.152 &  3.053 &  2.999 &  3.043
&& \E [\hat b_T] &  2.101 &  2.035 &  1.992 &  2.036\\
&   \var[\hat a_T] &   0.147 &  0.069 &  0.038 &  0.038
&&    \var[\hat b_T] &  0.071 &  0.038 &   0.020 &  0.023\\
\addlinespace
& \E [\check a_T]   &  3.029 &  3.011 &  3.008 &  3.001
&& \E [\check b_T]  &  1.998 &  1.999 &  1.993 &  2.006\\
& \var[\check a_T] &  0.026 &  0.014 &  0.007 &  0.007
&& \var[\check b_T] & 0.013 &  0.008 &  0.004 &  0.004\\
\addlinespace
& \E [\tilde a_T]  &  3.186 &  2.996 &  3.043 &  2.993
&& \E [\tilde b_T] &  2.164 &  2.005 &   2.030 &  2.002\\
& \var[\tilde a_T] &  0.495 &  0.308 &  0.278 &  0.221
&&    \var[\tilde b_T] &  0.354 &  0.207 &  0.205 &  0.166\\
\midrule

\beta = 0.7& \E [\hat a_T]   &  3.099 &  3.099 &   3.060 &  3.006
&& \E [\hat b_T]  & 2.075 &  2.085 &  2.049 &  2.021\\
&   \var[\hat a_T] &   0.121 &  0.058 &  0.062 &  0.041
&&    \var[\hat b_T] & 0.084 &  0.033 &  0.041 &  0.024\\
\addlinespace
& \E [\check a_T]  &   3.014 &  3.002 &  3.004 &  2.982
&& \E [\check b_T] &  2.003 &  2.015 &  2.006 &  2.017\\
& \var[\check a_T]  &  0.019 &  0.013 &  0.009 &  0.008
&&    \var[\check b_T]&  0.014 &  0.008 &  0.006 &  0.005\\
\addlinespace
& \E [\tilde a_T]  &   3.141 &  3.085 &  3.002 &  3.056
&& \E [\tilde b_T] &  2.114 &  2.061 &   2.000 &  2.043\\
& \var[\tilde a_T]  &  0.488 &  0.354 &  0.277 &  0.241
&& \var[\tilde b_T] &  0.376 &  0.264 &  0.211 &  0.188\\
\midrule

\beta = 0.8& \E [\hat a_T]   &  3.157 &  3.058 &  3.032 &  3.032
&& \E [\hat b_T] &  2.133 &  2.037 &  2.022 &  2.013\\
&   \var[\hat a_T] &  0.123 &   0.070 &  0.044 &  0.036
&&    \var[\hat b_T] &  0.098 &  0.055 &  0.033 &  0.022\\
\addlinespace
& \E [\check a_T]  &  3.016 &  3.019 &  3.009 &  3.017
&& \E [\check b_T]  &  2.013 &  1.992 &  1.997 &   1.99\\
& \var[\check a_T] &  0.029 &  0.011 &  0.008 &  0.007
&&    \var[\check b_T] &  0.022 &  0.008 &  0.006 &  0.004\\
\addlinespace
& \E [\tilde a_T]  &   3.154 &  3.115 &  3.044 &  3.037
&& \E [\tilde b_T]  &  2.108 &  2.098 &   2.040 &  2.034\\
& \var[\tilde a_T]  &  0.496 &  0.348 &  0.254 &  0.246
&&    \var[\tilde b_T] &  0.397 &   0.270 &  0.195 &   0.190\\
\midrule

\beta = 0.9& \E [\hat a_T]  &  3.129 &  3.034 &   3.050 &  3.015
&& \E [\hat b_T] &  2.113 &  2.029 &  2.048 &  2.006\\
&   \var[\hat a_T] &   0.103 &  0.072 &  0.038 &  0.026
&&    \var[\hat b_T] &  0.102 &  0.055 &  0.029 &  0.024\\
\addlinespace
& \E [\check a_T]  &  3.009 &  3.005 &  3.001 &  3.009
&& \E [\check b_T] &  2.012 &  2.002 &  2.008 &  1.994\\
& \var[\check a_T] &  0.024 &  0.015 &  0.009 &  0.006
&&    \var[\check b_T] &  0.016 &  0.011 &  0.007 &  0.005\\
\addlinespace
& \E [\tilde a_T]  &  3.197 &  3.059 &  3.124 &  3.044
&& \E [\tilde b_T] &   2.180 &  2.043 &  2.086 &  2.037\\
& \var[\tilde a_T]  &   0.579 &  0.384 &  0.335 &  0.251
&&    \var[\tilde b_T] &  0.486 &  0.323 &  0.264 &  0.204\\
\bottomrule
\end{tabular}
\end{table}

\setlength{\tabcolsep}{6pt}
\begin{table}
\centering
\caption{Estimates of $\beta$ and $\sigma^2$ \label{tab:2}}
\footnotesize
\begin{tabular}{*{7}{Q}}
\toprule
& \E[\hat\beta_1] & \var[\hat\beta_1] & \E[\hat\beta_2] & \var[\hat\beta_2] &\E[\hat\sigma^2] & \var[\hat\sigma^2] \\\otoprule
\beta = 0.5 & 0.5246 & 0.0067 & 0.5235 & 0.0070 & 1.0016 & 0.0013 \\
\beta = 0.6 & 0.6115 & 0.0062 & 0.6203 & 0.0071 & 1.0046 & 0.0026 \\
\beta = 0.7 & 0.7077 & 0.0085 & 0.7107 & 0.0083 & 1.0052 & 0.0024 \\
\beta = 0.8 & 0.8181 & 0.0119 & 0.8210 & 0.0202 & 1.0114 & 0.0030 \\
\beta = 0.9 & 0.9008 & 0.0095 & 0.9227 & 0.0157 & 1.0100 & 0.0040\\
\bottomrule
\end{tabular}
\end{table}

\section*{Acknowledgments}
This work was supported by the National Research Fund of Ukraine under Grant 2020.02/0026.

\end{document}